%
%
%
%


\pdfoutput=1  
\documentclass[10pt]{amsart}
\usepackage{amscd}
\usepackage{amssymb}
\usepackage[all]{xy}
\usepackage{lmodern}
\usepackage[T1]{fontenc}
\usepackage{graphicx}
\title[Completion and Torsion]{Errata: On the Homology of Completion and 
Torsion}
\date{19 June 2015}
\usepackage{hyperref}
\hypersetup{colorlinks=false}

\author{Marco Porta, Liran Shaul and Amnon Yekutieli}



\address{Porta: Department of  Mathematics
Ben Gurion University,
Be'er Sheva 84105,
Israel}
\email{marcoporta1@libero.it}

\address{Shaul: 
Universiteit Antwerpen, Departement Wiskunde-Informatica, Middelheim \lb 
Campus, Middelheimlaan 1, 2020 Antwerp, Belgium}
\email{Liran.Shaul@uantwerpen.be}

\address{Yekutieli: Department of  Mathematics
Ben Gurion University,
Be'er Sheva 84105,
Israel}
\email{amyekut@math.bgu.ac.il}


%

\newtheorem{thm}[equation]{Theorem}

\newtheorem{prop}[equation]{Proposition}
\newtheorem{lem}[equation]{Lemma}
\theoremstyle{definition}

\newtheorem{exa}[equation]{Example}

\setcounter{tocdepth}{1}

\newcommand{\iso}{\xrightarrow{\simeq}}

\newcommand{\xar}{\xrightarrow}
\newcommand{\opn}{\operatorname}
\newcommand{\cat}[1]{\operatorname{\mathsf{#1}}}

\newcommand{\mfrak}[1]{\mathfrak{#1}}

\newcommand{\mrm}[1]{\mathrm{#1}}
\newcommand{\mbb}[1]{\mathbb{#1}}

\newcommand{\Ga}{\Gamma}
\newcommand{\La}{\Lambda}
\newcommand{\si}{\sigma}

\renewcommand{\a}{\mfrak{a}}

\newcommand{\ba}{\bsym{a}}

\newcommand{\tup}[1]{\textup{#1}}
\newcommand{\bsym}[1]{\boldsymbol{#1}}

\newcommand{\ot}{\otimes}


\newcommand{\lb}{\linebreak}




%


\begin{document}

\maketitle

In this note we correct two errors in our paper \cite{PSY}. 

\begin{enumerate}
\item There is an error in the proof of Theorem 7.12 (GM Duality) of 
\cite{PSY}. The statement itself is correct, and it is repeated as Theorem 
\ref{thm:1} below, with a correct proof. 

\item Just before formula (3.12) in \cite{PSY}, we said that 
``$\cat{Mod}_{\a\tup{-tor}} A$ is a thick abelian subcategory of  $\cat{Mod} 
A$''. This is true if the ideal $\a$ is finitely generated, but might be false 
otherwise. There is no implication of this error on the rest of the paper, 
since WPR ideals are by definition finitely generated. We thank R. Vyas for 
mentioning this error to us. 
\end{enumerate}

In the rest of the note we recall the definitions and results from \cite{PSY}, 
that are needed to prove Theorem \ref{thm:1}.

Let $A$ be a commutative ring, and let $\a$ be a weakly proregular 
ideal in it (see \cite[Definition 4.21]{PSY}). We choose a finite sequence 
$\ba = (a_1, \ldots, a_n)$ that generates the ideal $\a$. 
The telescope complex associated to this sequence is 
$\opn{Tel}(A; \ba)$. 
There is a canonical homomorphism of complexes
\[ u_{\ba} : \opn{Tel}(A; \ba) \to A . \]
See \cite[Section 5]{PSY}. Let us write $T := \opn{Tel}(A; \ba)$
and $u := u_{\ba}$. For any complex of $A$-modules $M$, we identify $M$ with 
$A \ot_A M$ and with $\opn{Hom}_A(A, M)$. This allows us to form the 
homomorphisms
\[ u \ot \opn{id} : T \ot_A M \to M \]
and 
\[ \opn{Hom}(u, \opn{id}) : M  \to \opn{Hom}_A(T, M) . \]

The mistake in the proof of \cite[Theorem 7.12]{PSY} was as follows. We had 
claimed that for any K-projective complex $P$, the homomorphism of complexes 
\begin{equation} \label{eqn:1}
\opn{Hom}(u, \opn{id}) : T \ot_A P \to 
\opn{Hom}_A(T, T \ot_A P) 
\end{equation}
is a quasi-isomorphism. But this is {\em false}. 
Here is a counterexample: 

\begin{exa}
Take $A := \mbb{K}[[t]]$, the ring of formal power series over a field 
$\mbb{K}$, with $\a := (t)$. Consider the complex $P := A$. Then there are 
derived category isomorphisms
\[ T \ot_A P \cong T \cong \mrm{R} \Ga_{\a}(A) \cong 
\mrm{H}^1(\mrm{R} \Ga_{\a}(A))[-1] \]
and 
\[ \opn{Hom}_A(T, T \ot_A P) \cong 
\opn{Hom}_A(T, T) \cong \mrm{L} \La_{\a}(A) \cong \La_{\a}(A) \cong A . \] 
\end{exa}

The next two results from \cite{PSY} will be crucial for the proof of Theorem 
\ref{thm:1}, so we copy them. 

\begin{prop}[{\cite[Proposition 5.8]{PSY}}] \label{prop:20}
Let $A$ be a commutative ring, let $\a$ be a weakly proregular 
ideal in $A$, and let $\ba$ be a finite sequence  that generates the ideal 
$\a$. For any $M \in \cat{D}(\cat{Mod} A)$ there is an isomorphism 
\[ v^{\mrm{R}}_{\ba, M} : \mrm{R} \Ga_{\a}(M) \iso \opn{Tel}(A; \ba) \ot_A M \]
in $\cat{D}(\cat{Mod} A)$. The isomorphism 
$v^{\mrm{R}}_{\ba, M}$
is functorial in $M$, and the diagram 

\[ \UseTips \xymatrix @C=9ex @R=7ex {
\mrm{R} \Ga_{\a}(M)
\ar[r]^(0.4){ v^{\mrm{R}}_{\ba, M} }
\ar[dr]_{ \si^{\mrm{R}}_{M} }
&
\opn{Tel}(A; \ba) \ot_A M
\ar[d]^{ u_{\ba} \ot \opn{id} }
\\
&
M
} \]

\noindent
in $\cat{D}(\cat{Mod} A)$ is commutative. 
\end{prop}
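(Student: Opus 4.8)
Write $T := \opn{Tel}(A; \ba)$ and $u := u_{\ba}$, as above. Since $T = \opn{Tel}(A; a_1) \ot_A \cdots \ot_A \opn{Tel}(A; a_n)$ is a bounded complex of free $A$-modules, it is K-flat; hence $N \mapsto T \ot_A N$ preserves quasi-isomorphisms and already computes $T \ot^{\mrm{L}}_A(-)$ on $\cat{D}(\cat{Mod} A)$, so no resolution of $M$ is needed to make sense of $T \ot_A M$. Choose a K-injective resolution $\rho \colon M \to J$ with each $J^p$ an injective $A$-module, so that $\mrm{R} \Ga_{\a}(M) = \Ga_{\a}(J)$ and $\si^{\mrm{R}}_{M}$ is represented by the inclusion $\iota \colon \Ga_{\a}(J) \inj J$. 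I would \emph{define} $v^{\mrm{R}}_{\ba, M}$ to be the morphism in $\cat{D}(\cat{Mod} A)$ given by the zig-zag
\[
\Ga_{\a}(J) \;\xleftarrow{\, u \,\ot\, \opn{id} \,}\; T \ot_A \Ga_{\a}(J) \;\xrightarrow{\, \opn{id} \,\ot\, \iota \,}\; T \ot_A J \;\xleftarrow{\, \opn{id} \,\ot\, \rho \,}\; T \ot_A M ,
\]
whose last arrow is a quasi-isomorphism by K-flatness of $T$. It then remains to check that the first two arrows are quasi-isomorphisms, that the construction is functorial and independent of $J$, and that the triangle with $\si^{\mrm{R}}_{M}$ commutes.

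\textbf{The two quasi-isomorphisms.} For the leftmost arrow: $\Ga_{\a}(J)$ is a complex of $\a$-torsion modules, and for \emph{any} complex $N$ of $\a$-torsion modules the map $u \ot \opn{id} \colon T \ot_A N \to N$ is a quasi-isomorphism. Using the tensor factorization of $T$ and K-flatness, this reduces to the case $n = 1$ of a single element $a$ acting locally nilpotently on $N$, where one computes directly that the differential of $\opn{Tel}(A; a) \ot_A N$ — essentially $\opn{id}$ minus the ``shift and multiply by $a$'' operator — is injective with cokernel $N$ in degree $0$ (equivalently: $\opn{Tel}(A; a) \ot_A N$ maps quasi-isomorphically to the stable Koszul complex $(N \to N_a)$, and $N_a = 0$). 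The arrow $\opn{id} \ot \iota$ is the crux, and the only place weak proregularity enters: its cone is $T \ot_A (J / \Ga_{\a}(J))$, and I claim each $T \ot_A (J^p / \Ga_{\a}(J^p))$ is acyclic. This follows from the characterization of weak proregularity recalled from \cite{PSY} — namely that for every injective $A$-module $I$ the complex $T \ot_A I$ has cohomology concentrated in degree $0$, equal there to $\Ga_{\a}(I)$, with $u \ot \opn{id}$ inducing the inclusion $\Ga_{\a}(I) \inj I$ on $\mrm{H}^0$; this is the translation, via the presentation of $T$ as the homotopy colimit of the finite Koszul complexes $\opn{Kos}(A; \ba^{[k]})$ and via $\opn{Hom}_A(-, I)$-duality, of the pro-systems $\{\mrm{H}_{q}(\opn{Kos}(A; \ba^{[k]}))\}_k$ being pro-zero for $q \geq 1$ (Definition 4.21 of \cite{PSY}). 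Combining this with the previous paragraph applied to the $\a$-torsion module $\Ga_{\a}(J^p)$, the exact triangle obtained from $0 \to \Ga_{\a}(J^p) \to J^p \to J^p / \Ga_{\a}(J^p) \to 0$ forces $T \ot_A (J^p / \Ga_{\a}(J^p))$ acyclic; and since $T$ is concentrated in a bounded range of degrees, the spectral sequence of the double complex $T \ot_A (J / \Ga_{\a}(J))$ converges, so degreewise acyclicity gives acyclicity of the whole. I expect this, together with the needed input from \cite{PSY} on weak proregularity, to be the bulk of the work.

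\textbf{Functoriality and the triangle.} Functoriality in $M$ follows because a morphism in $\cat{D}(\cat{Mod} A)$ lifts to a morphism of K-injective resolutions (unique up to homotopy) and each of the four arrows in the zig-zag is natural; independence of the chosen $J$ is the usual comparison of resolutions. For the triangle: by naturality of $u \ot \opn{id}$ with respect to $\iota$, the composite $T \ot_A \Ga_{\a}(J) \xrightarrow{\opn{id} \ot \iota} T \ot_A J \xrightarrow{u \,\ot\, \opn{id}} J$ equals $T \ot_A \Ga_{\a}(J) \xrightarrow{u \,\ot\, \opn{id}} \Ga_{\a}(J) \xrightarrow{\iota} J$. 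Hence $(u \ot \opn{id}_{M}) \circ v^{\mrm{R}}_{\ba, M}$ is obtained from $v^{\mrm{R}}_{\ba, M}$ by cancelling the quasi-isomorphism $u \ot \opn{id} \colon T \ot_A \Ga_{\a}(J) \to \Ga_{\a}(J)$ against its inverse, which leaves precisely $\iota = \si^{\mrm{R}}_{M}$, as required.
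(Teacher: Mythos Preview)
The errata paper you were given does not actually prove this proposition: it merely quotes \cite[Proposition 5.8]{PSY} verbatim as background input for the corrected proof of GM Duality. So there is no proof in the present paper to compare against; the argument lives in the original \cite{PSY}.

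That said, your proposal is correct and is essentially the expected argument. You resolve $M$ K-injectively, use K-flatness of $T$ to avoid resolving on the tensor side, and build $v^{\mrm{R}}_{\ba,M}$ as a three-step zig-zag. The two nontrivial quasi-isomorphisms are exactly the right ones: (i) $u \ot \opn{id}$ is a quasi-isomorphism on complexes of $\a$-torsion modules, which you correctly reduce to the one-variable case via the stable Koszul complex; and (ii) $\opn{id} \ot \iota$ is a quasi-isomorphism, which is where weak proregularity enters, via the characterization that $T \ot_A I$ has cohomology $\Ga_{\a}(I)$ concentrated in degree $0$ for every injective $I$. Your deduction of acyclicity of $T \ot_A (J^p/\Ga_{\a}(J^p))$ from the short exact sequence, and the passage to the total complex using boundedness of $T$, are both fine. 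The verification of the triangle via naturality of $u \ot \opn{id}$ is also clean. This matches the strategy of \cite[\S5]{PSY}, where the comparison between $\mrm{R}\Ga_{\a}$ and the telescope (equivalently, infinite Koszul) complex is set up precisely through K-injective resolutions and the weak-proregularity criterion on injectives.
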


\begin{prop}[{\cite[Corollary 5.25]{PSY}}] \label{prop:21}
Let $A$ be a commutative ring, let $\a$ be a weakly proregular 
ideal in $A$, and let $\ba$ be a finite sequence  that generates the ideal 
$\a$. For any $M \in \cat{D}(\cat{Mod} A)$ there is an isomorphism 
\[ \opn{tel}^{\mrm{L}}_{\ba, M} : 
\opn{Hom}_A \bigl( \opn{Tel}(A; \ba), M \bigr) \to \mrm{L} \La_{\a}(M) \]
in $\cat{D}(\cat{Mod} A)$. The isomorphism 
$\opn{tel}^{\mrm{L}}_{\ba, M}$
is functorial in $M$, and the diagram 

\[ \UseTips \xymatrix @C=9ex @R=7ex {
M
\ar[d]_{ \opn{Hom}(u_{\ba}, \opn{id}) }
\ar[dr]^{ \tau^{\mrm{L}}_{M} }
\\
\opn{Hom}_A \bigl( \opn{Tel}(A; \ba), M \bigr)
\ar[r]_(0.64){ \opn{tel}^{\mrm{L}}_{\ba, M} }
&
\mrm{L} \La_{\a}(M) 
} \]

\noindent
in $\cat{D}(\cat{Mod} A)$ is commutative. 
\end{prop}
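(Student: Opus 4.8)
One might hope to read this off Proposition \ref{prop:20} by ``dualizing'', but the duality one would invoke is essentially Theorem \ref{thm:1} itself, so I would instead argue directly. Write $T := \opn{Tel}(A; \ba)$, and for $k \ge 1$ let $K_{k} := K\bigl(A; a_1^{k}, \ldots, a_n^{k}\bigr)$ be the Koszul complex, placed in cohomological degrees $-n, \ldots, 0$, so that $\opn{H}^{0}\bigl(K_{k} \ot_A N\bigr) = N / (a_1^{k}, \ldots, a_n^{k}) N$ for an $A$-module $N$. First I would reduce to a projective model: $T$ is a bounded complex of free $A$-modules, hence K-projective (and K-flat), so $\opn{Hom}_A(T, -)$ preserves quasi-isomorphisms; choosing a functorial K-projective resolution $P \xrightarrow{\simeq} M$ we get $\opn{Hom}_A(T, P) \xrightarrow{\simeq} \opn{Hom}_A(T, M)$ in $\cat{D}(\cat{Mod} A)$, while $\mrm{L}\La_{\a}(M) = \La_{\a}(P) = \varprojlim_{k} P/\a^{k} P$. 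It then suffices to construct, naturally in the K-projective complex $P$, a quasi-isomorphism $\theta_{P} : \opn{Hom}_A(T, P) \to \La_{\a}(P)$ with $\theta_{P} \circ \opn{Hom}(u_{\ba}, \opn{id})$ equal to the canonical completion map $P \to \La_{\a}(P)$ (which represents $\tau^{\mrm{L}}_{M}$ for K-projective $P$); setting $\opn{tel}^{\mrm{L}}_{\ba, M} := \theta_{P} \circ \bigl(\opn{Hom}_A(T, P) \xrightarrow{\simeq} \opn{Hom}_A(T, M)\bigr)^{-1}$ then does it.

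Next I would build $\theta_{P}$ from the ``mapping telescope'' description of $T$ in \cite[Section 5]{PSY}: $T$ is a mapping telescope of the dualized Koszul complexes $\opn{Hom}_A(K_{k}, A)$ along their canonical structure maps, with $u_{\ba}$ the structural augmentation. Applying $\opn{Hom}_A(-, P)$ to the short exact sequence defining a mapping telescope, and using that $\opn{Hom}_A(\opn{Hom}_A(K_{k}, A), P) \cong K_{k} \ot_A P$ since $K_{k}$ is a bounded complex of finitely generated free modules, one obtains a natural quasi-isomorphism $\opn{Hom}_A(T, P) \simeq \opn{holim}_{k}\bigl(K_{k} \ot_A P\bigr)$ under which $\opn{Hom}(u_{\ba}, \opn{id})$ is the canonical map $P \to \opn{holim}_{k}(K_{k} \ot_A P)$. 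Composing with the degree-$0$ augmentations $K_{k} \ot_A P \twoheadrightarrow P/(a_1^{k}, \ldots, a_n^{k}) P$, passing to $\opn{holim}_{k}$, and using that the ideals $(a_1^{k}, \ldots, a_n^{k})$ are cofinal with the powers $\a^{k}$, gives the natural morphism $\theta_{P} : \opn{Hom}_A(T, P) \to \varprojlim_{k} P/\a^{k} P = \La_{\a}(P)$. Both required compatibilities are then immediate from the construction.

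The heart of the matter is showing $\theta_{P}$ is a quasi-isomorphism. Both $\opn{Hom}_A(T, -)$ and $\La_{\a}(-)$ send short exact sequences of complexes of projectives to short exact sequences (the first because $T$ is a complex of free modules, the second because such sequences are degreewise split), so for bounded $P$ a dévissage over the stupid truncation reduces the assertion to $P = A^{(S)}$ a single free module, and the unbounded case follows by a standard limit argument using that $T$ is bounded. For $P = A^{(S)}$ the Milnor $\varprojlim^{1}$-sequences express $\opn{H}^{q}\bigl(\opn{Hom}_A(T, A^{(S)})\bigr)$ through $\varprojlim_{k}$ and $\varprojlim^{1}_{k}$ of the towers $\bigl(\opn{H}^{*}(K_{k} \ot_A A^{(S)})\bigr)_{k} = \bigl(\opn{H}_{-*}(K_{k}) \ot_A A^{(S)}\bigr)_{k}$ (flatness of $A^{(S)}$), while $\La_{\a}(A^{(S)}) = \varprojlim_{k} A^{(S)}/(a_1^{k}, \ldots, a_n^{k}) A^{(S)}$. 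Now weak proregularity of $\ba$ (\cite[Definition 4.21]{PSY}) says precisely that the towers $\bigl(\opn{H}_{i}(K_{k})\bigr)_{k}$ are pro-zero for $i \ge 1$; pro-zero is preserved under $\ot_A A^{(S)}$ and annihilates both $\varprojlim$ and $\varprojlim^{1}$, so the only surviving cohomology is in degree $0$, where it equals $\varprojlim_{k} A^{(S)}/(a_1^{k}, \ldots, a_n^{k}) A^{(S)} = \La_{\a}(A^{(S)})$ (the tower being surjective), and there $\theta_{A^{(S)}}$ is the identity. Hence $\theta_{A^{(S)}}$ is a quasi-isomorphism, and unwinding the dévissage gives it for all K-projective $P$.

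Finally I would set $\opn{tel}^{\mrm{L}}_{\ba, M}$ as in the first paragraph and check functoriality in $M$ and commutativity of the displayed triangle; these follow formally from the naturality of $M \mapsto P$ and of $\theta$ together with the compatibilities established above. I expect the third step to be the main obstacle: since $\opn{Hom}_A(T, -)$ is built from infinite products, the reduction to free modules must proceed via inverse limits rather than direct limits, and the real content is the identification of the surviving $\varprojlim$ and $\varprojlim^{1}$ terms, which is exactly what the weak proregularity hypothesis is designed to supply.
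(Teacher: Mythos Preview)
This errata does not actually prove the proposition; it is quoted verbatim from \cite[Corollary 5.25]{PSY} as an input to Theorem \ref{thm:1}, so there is nothing in the present paper to compare your argument against.

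Your proof is nonetheless correct, and it is essentially the argument carried out in \cite{PSY}. There one defines the map $\opn{tel}_{\ba, P} : \opn{Hom}_A(T, P) \to \La_{\a}(P)$ explicitly, identifies $\opn{Hom}_A(T, P)$ with the sequential homotopy limit of the Koszul complexes $K_k \ot_A P$ (exactly via the telescope short exact sequence and the finite-free self-duality of $K_k$ that you invoke), and then uses the Milnor $\varprojlim/\varprojlim^1$ sequence together with the weak proregularity hypothesis---pro-vanishing of the towers $\bigl(\mrm{H}_i(K_k)\bigr)_{k}$ for $i>0$---to conclude that $\opn{tel}_{\ba, P}$ is a quasi-isomorphism when $P$ is K-flat. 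Passing to the derived category via a K-projective resolution, using that $T$ is K-projective so that $\opn{Hom}_A(T,-)$ preserves quasi-isomorphisms, gives the stated result. Your d\'evissage to a single free module and the ``bounded window'' argument for unbounded $P$ (legitimate because both $\opn{Hom}_A(T,-)$ with $T$ bounded and $\La_{\a}$ are computed from finitely many degrees of $P$) are both sound; the only place the hypothesis enters is, as you correctly isolate, the vanishing of the relevant $\varprojlim$ and $\varprojlim^1$ terms.
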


Here are four lemmas that will be needed for the proof of Theorem \ref{thm:1}.
We retain the assumptions of the propositions above, and the shorthand 
$T := \opn{Tel}(A; \ba)$ and $u := u_{\ba}$. The next lemma replaces the  
problematic (\ref{eqn:1}). 

\begin{lem} \label{lem:1}
For any complex of $A$-modules $M$, the homomorphism of complexes
\[ \opn{Hom}_A(\opn{id}, u \ot \opn{id}) :
\opn{Hom}_A(T, T \ot_A M) \to \opn{Hom}_A(T, M) \]
is a quasi-isomorphism. 
\end{lem}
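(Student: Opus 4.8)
The plan is not to manipulate the complexes directly, but to pass to $\cat{D}(\cat{Mod} A)$, rewrite the map of the lemma by means of Propositions \ref{prop:20} and \ref{prop:21}, and then read off the conclusion from the standard behaviour of $\mrm{R} \Ga_{\a}$ and $\mrm{L} \La_{\a}$ for a weakly proregular ideal.

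A chain map is a quasi-isomorphism precisely when it becomes invertible in $\cat{D}(\cat{Mod} A)$, so I work there. Applying the naturality of the isomorphism $\opn{tel}^{\mrm{L}}_{\ba, -} : \opn{Hom}_A(T, -) \Rightarrow \mrm{L} \La_{\a}$ of Proposition \ref{prop:21} to the morphism $u \ot \opn{id}_M : T \ot_A M \to M$ identifies the map $\opn{Hom}_A(\opn{id}_T, u \ot \opn{id}_M)$ of the lemma with $\mrm{L} \La_{\a}(u \ot \opn{id}_M) : \mrm{L} \La_{\a}(T \ot_A M) \to \mrm{L} \La_{\a}(M)$. By the commutative triangle of Proposition \ref{prop:20}, $u \ot \opn{id}_M$ is identified, via the isomorphism $v^{\mrm{R}}_{\ba, M}$, with the canonical morphism $\si^{\mrm{R}}_M : \mrm{R} \Ga_{\a}(M) \to M$. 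Hence the lemma is equivalent to the assertion that
\[ \mrm{L} \La_{\a}(\si^{\mrm{R}}_M) : \mrm{L} \La_{\a} \bigl( \mrm{R} \Ga_{\a}(M) \bigr) \to \mrm{L} \La_{\a}(M) \]
is an isomorphism in $\cat{D}(\cat{Mod} A)$ for every complex $M$.

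To prove this I would embed $\si^{\mrm{R}}_M$ in a distinguished triangle $\mrm{R} \Ga_{\a}(M) \xrightarrow{\si^{\mrm{R}}_M} M \to M' \to \mrm{R} \Ga_{\a}(M)[1]$ in $\cat{D}(\cat{Mod} A)$. Applying the triangulated functor $\mrm{R} \Ga_{\a}$ and using the idempotence of $\mrm{R} \Ga_{\a}$ for a weakly proregular ideal --- i.e.\ that $\mrm{R} \Ga_{\a}(\si^{\mrm{R}}_M)$ is an isomorphism --- yields $\mrm{R} \Ga_{\a}(M') \simeq 0$. Now I would invoke the other basic property of weakly proregular ideals, namely that $\mrm{R} \Ga_{\a}$ and $\mrm{L} \La_{\a}$ have the same kernel, so that $\mrm{R} \Ga_{\a}(M') \simeq 0$ forces $\mrm{L} \La_{\a}(M') \simeq 0$. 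Applying $\mrm{L} \La_{\a}$ to the triangle, $\mrm{L} \La_{\a}(M')$ is a cone of $\mrm{L} \La_{\a}(\si^{\mrm{R}}_M)$; being zero, it makes $\mrm{L} \La_{\a}(\si^{\mrm{R}}_M)$ an isomorphism, as wanted.

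The main obstacle --- in fact the only non-formal step --- is the appeal in the last paragraph to these two structural facts about weakly proregular ideals (equivalently, to the single fact that $\mrm{L} \La_{\a}(\si^{\mrm{R}}_{(-)})$ is an isomorphism, which is part of the MGM equivalence). This is exactly where weak proregularity is indispensable: for a general ideal these statements fail, in keeping with the breakdown of (\ref{eqn:1}) shown by the Example above. Everything else is routine bookkeeping --- mainly tracing through the naturality claims in Propositions \ref{prop:20} and \ref{prop:21} to confirm that the chain-level map of the lemma really is $\mrm{L} \La_{\a}(\si^{\mrm{R}}_M)$ in $\cat{D}(\cat{Mod} A)$. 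One can also run a parallel argument entirely with complexes: the map of the lemma is a quasi-isomorphism iff $\opn{Hom}_A\bigl(T, \opn{cone}(u) \ot_A M\bigr)$ is acyclic, and $T \ot_A \opn{cone}(u)$ is even contractible, being the cone of $\opn{id}_T \ot u : T \ot_A T \to T$, which is a quasi-isomorphism (again by weak proregularity) between bounded complexes of free modules and hence a homotopy equivalence; but passing from there to acyclicity of $\opn{Hom}_A(T, \opn{cone}(u) \ot_A M)$ still needs the same weakly-proregular input.
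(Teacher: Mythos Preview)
Your proposal is correct and takes essentially the same route as the paper: pass to $\cat{D}(\cat{Mod} A)$, use Propositions \ref{prop:20} and \ref{prop:21} to identify the map with $\mrm{L} \La_{\a}(\si^{\mrm{R}}_M)$, and then conclude that this is an isomorphism. The only difference is cosmetic: the paper cites \cite[Lemma~7.2]{PSY} directly for that last step, whereas you unpack it via a cone/triangle argument whose key input (idempotence of $\mrm{R}\Ga_{\a}$ together with ``same kernel'') is, as you yourself note, equivalent to that lemma.
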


\begin{proof}
We will prove that this is an isomorphism in $\cat{D}(\cat{Mod} A)$. 
Using Propositions \ref{prop:20} and \ref{prop:21}, we may replace the given 
morphism with 
\[ \mrm{L} \La_{\a}(\si^{\mrm{R}}_M) : 
\mrm{L} \La_{\a}(\mrm{R} \Ga_{\a}(M)) \to \mrm{L} \La_{\a}(M) . \]
According to \cite[Lemma 7.2]{PSY}, the morphism 
$\mrm{L} \La_{\a}(\si^{\mrm{R}}_M)$
is an isomorphism.
\end{proof}

The following lemma is actually stated correctly in the proof of 
\cite[Theorem 7.12]{PSY}, but we repeat it for completeness. 

\begin{lem} \label{lem:2}
For any complex of $A$-modules $M$, the homomorphism of complexes
\[ \opn{id} \ot \opn{Hom}(u, \opn{id}) :
T \ot_A M \to T \ot_A \opn{Hom}_A(T, M) \]
is a quasi-isomorphism. 
\end{lem}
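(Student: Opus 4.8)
The plan is to mirror the proof of Lemma \ref{lem:1}, with the roles of $\mrm{R}\Ga_{\a}$ and $\mrm{L}\La_{\a}$ interchanged. As in that proof, it suffices to show that $\opn{id} \ot \opn{Hom}(u, \opn{id})$ becomes an isomorphism in $\cat{D}(\cat{Mod} A)$, because a homomorphism of complexes is a quasi-isomorphism exactly when it is invertible in the derived category. I will also use, as needed, that $T$ is a bounded complex of free $A$-modules, hence K-flat and K-projective; in particular $T \ot_A (-)$ preserves quasi-isomorphisms, and for an honest homomorphism of complexes $\phi$ the honest homomorphism $\opn{id}_T \ot_A \phi$ represents $T \ot^{\mrm{L}}_A \phi$ in $\cat{D}(\cat{Mod} A)$.

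Write $\phi := \opn{Hom}(u, \opn{id}_M) : M \to \opn{Hom}_A(T, M)$, so that the homomorphism in question is $\opn{id}_T \ot_A \phi : T \ot_A M \to T \ot_A \opn{Hom}_A(T, M)$. First I would apply Proposition \ref{prop:20} to the complexes $M$ and $\opn{Hom}_A(T, M)$ and invoke the functoriality of $v^{\mrm{R}}_{\ba, -}$ along $\phi$: this yields a commutative square in $\cat{D}(\cat{Mod} A)$ whose horizontal arrows are the isomorphisms $v^{\mrm{R}}_{\ba, M}$ and $v^{\mrm{R}}_{\ba, \opn{Hom}_A(T, M)}$, whose left vertical arrow is $\mrm{R}\Ga_{\a}(\phi)$, and whose right vertical arrow is $\opn{id}_T \ot_A \phi$. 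Hence $\opn{id}_T \ot_A \phi$ is invertible in $\cat{D}(\cat{Mod} A)$ if and only if $\mrm{R}\Ga_{\a}(\phi)$ is. Next I would bring in Proposition \ref{prop:21}, whose commutative triangle asserts that $\opn{tel}^{\mrm{L}}_{\ba, M} \circ \phi = \tau^{\mrm{L}}_M$ in $\cat{D}(\cat{Mod} A)$; applying the functor $\mrm{R}\Ga_{\a}$ and using that $\opn{tel}^{\mrm{L}}_{\ba, M}$ is an isomorphism, one deduces that $\mrm{R}\Ga_{\a}(\phi)$ is invertible if and only if $\mrm{R}\Ga_{\a}(\tau^{\mrm{L}}_M) : \mrm{R}\Ga_{\a}(M) \to \mrm{R}\Ga_{\a}(\mrm{L}\La_{\a}(M))$ is. Chaining the two equivalences reduces the lemma to the single assertion that $\mrm{R}\Ga_{\a}(\tau^{\mrm{L}}_M)$ is an isomorphism.

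That assertion is exactly the statement obtained from the one used in the proof of Lemma \ref{lem:1} by exchanging torsion with completion, and it is recorded in \cite[Lemma 7.2]{PSY}; so at this point I would simply cite it. Thus the only genuine content of the argument lies in that idempotence-type input — everything else is formal bookkeeping with Propositions \ref{prop:20} and \ref{prop:21} — and, just as for Lemma \ref{lem:1}, no direct inspection of the telescope complex $T$ is required, since all of that analysis has been absorbed into the cited results. The step I would expect to need the most care is the identification, via functoriality of $v^{\mrm{R}}_{\ba, -}$ and $\opn{tel}^{\mrm{L}}_{\ba, -}$, of the honest homomorphism $\opn{id} \ot \opn{Hom}(u, \opn{id})$ with $\mrm{R}\Ga_{\a}(\tau^{\mrm{L}}_M)$ in $\cat{D}(\cat{Mod} A)$.
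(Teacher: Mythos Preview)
Your argument is correct and matches the paper's proof exactly: use Propositions \ref{prop:20} and \ref{prop:21} to identify the given homomorphism, in $\cat{D}(\cat{Mod} A)$, with $\mrm{R}\Ga_{\a}(\tau^{\mrm{L}}_M)$, and then invoke the relevant result from \cite{PSY}. The only slip is the citation at the end: the fact that $\mrm{R}\Ga_{\a}(\tau^{\mrm{L}}_M)$ is an isomorphism is \cite[Lemma~7.6]{PSY}, not \cite[Lemma~7.2]{PSY} --- the latter is the statement for $\mrm{L}\La_{\a}(\si^{\mrm{R}}_M)$, which is what Lemma~\ref{lem:1} used.
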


\begin{proof}
As in the proof of Lemma \ref{lem:1}, it suffices to prove that the 
morphism
\[ \mrm{R} \Ga_{\a}(\tau^{\mrm{L}}_M) : 
\mrm{R} \Ga_{\a}(M) \to 
\mrm{R} \Ga_{\a}(\mrm{L} \La_{\a}(M))   \]
in $\cat{D}(\cat{Mod} A)$ is an isomorphism. 
This is true by \cite[Lemma 7.6]{PSY}.
\end{proof}

\begin{lem} \label{lem:4}
For any complex of $A$-modules $N$, the homomorphism of complexes
\[ \opn{Hom}(\opn{id}_T, \opn{Hom}(u, \opn{id}_N)) :
\opn{Hom}_A (T, N) \to \opn{Hom}_A(T, \opn{Hom}_A(T, N)) \]
is a quasi-isomorphism. 
\end{lem}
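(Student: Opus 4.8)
The plan is to mimic the proofs of Lemmas \ref{lem:1} and \ref{lem:2}: pass to $\cat{D}(\cat{Mod} A)$ and translate the whole situation into the language of the derived completion functor $\mrm{L} \La_{\a}$ by means of Proposition \ref{prop:21}.

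First I would identify the morphism. Since the isomorphism $\opn{tel}^{\mrm{L}}_{\ba, M} : \opn{Hom}_A(T, M) \iso \mrm{L} \La_{\a}(M)$ of Proposition \ref{prop:21} is functorial in $M$, applying it to the morphism $f := \opn{Hom}(u, \opn{id}_N) : N \to \opn{Hom}_A(T, N)$ --- and using that the functor $\mrm{L} \La_{\a}$ carries the isomorphism $\opn{tel}^{\mrm{L}}_{\ba, N}$ to an isomorphism $\mrm{L} \La_{\a}\bigl( \opn{Hom}_A(T, N) \bigr) \iso \mrm{L} \La_{\a}\bigl( \mrm{L} \La_{\a}(N) \bigr)$ --- one obtains a commutative diagram in $\cat{D}(\cat{Mod} A)$ identifying the given morphism
\[ \opn{Hom}(\opn{id}_T, \opn{Hom}(u, \opn{id}_N)) : \opn{Hom}_A(T, N) \to \opn{Hom}_A(T, \opn{Hom}_A(T, N)) \]
with $\mrm{L} \La_{\a}(f)$, up to the functoriality isomorphisms of $\opn{tel}^{\mrm{L}}$. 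Now the commutative triangle in Proposition \ref{prop:21} gives $\opn{tel}^{\mrm{L}}_{\ba, N} \circ f = \tau^{\mrm{L}}_N$, so $f$ is, up to the isomorphism $\opn{tel}^{\mrm{L}}_{\ba, N}$ on its target, the canonical map $\tau^{\mrm{L}}_N : N \to \mrm{L} \La_{\a}(N)$. Applying $\mrm{L} \La_{\a}$, the given morphism is therefore identified --- up to the functoriality isomorphisms of $\opn{tel}^{\mrm{L}}$ --- with $\mrm{L} \La_{\a}(\tau^{\mrm{L}}_N) : \mrm{L} \La_{\a}(N) \to \mrm{L} \La_{\a}\bigl( \mrm{L} \La_{\a}(N) \bigr)$. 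Hence it suffices to prove that $\mrm{L} \La_{\a}(\tau^{\mrm{L}}_N)$ is an isomorphism in $\cat{D}(\cat{Mod} A)$.

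For this last point --- the idempotence of derived completion --- I would feed in the two earlier lemmas. Read through Propositions \ref{prop:20} and \ref{prop:21}, Lemma \ref{lem:1} says exactly that $\mrm{L} \La_{\a}(\si^{\mrm{R}}_M)$ is an isomorphism for every complex $M$, while Lemma \ref{lem:2} says that $\mrm{R} \Ga_{\a}(\tau^{\mrm{L}}_M)$ is an isomorphism for every $M$, hence so is $\mrm{L} \La_{\a}\bigl( \mrm{R} \Ga_{\a}(\tau^{\mrm{L}}_M) \bigr)$. Applying the functor $\mrm{L} \La_{\a}$ to the naturality square of $\si^{\mrm{R}}$ for the morphism $\tau^{\mrm{L}}_N : N \to \mrm{L} \La_{\a}(N)$, namely $\tau^{\mrm{L}}_N \circ \si^{\mrm{R}}_N = \si^{\mrm{R}}_{\mrm{L} \La_{\a}(N)} \circ \mrm{R} \Ga_{\a}(\tau^{\mrm{L}}_N)$, yields the identity
\[ \mrm{L} \La_{\a}(\tau^{\mrm{L}}_N) \circ \mrm{L} \La_{\a}(\si^{\mrm{R}}_N) = \mrm{L} \La_{\a}(\si^{\mrm{R}}_{\mrm{L} \La_{\a}(N)}) \circ \mrm{L} \La_{\a}\bigl( \mrm{R} \Ga_{\a}(\tau^{\mrm{L}}_N) \bigr) \]
in which three of the four arrows are already known to be isomorphisms; therefore $\mrm{L} \La_{\a}(\tau^{\mrm{L}}_N)$ is one as well.

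The only thing requiring genuine care is the bookkeeping in the first step --- checking that the several functoriality isomorphisms of $\opn{tel}^{\mrm{L}}$ assemble into an honestly commutative diagram, so that the transported morphism really is $\mrm{L} \La_{\a}$ applied to a natural map from the identity functor to $\mrm{L} \La_{\a}$. This turns out to be mild: once $\mrm{L} \La_{\a}$ is known to be idempotent, both natural transformations $\mrm{L} \La_{\a} \tau^{\mrm{L}}$ and $\tau^{\mrm{L}} \mrm{L} \La_{\a}$ from $\mrm{L} \La_{\a}$ to $\mrm{L} \La_{\a} \mrm{L} \La_{\a}$ consist of isomorphisms, so any reasonable identification of the transported map with one of them already gives the conclusion. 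The naturality of $\si^{\mrm{R}}$ (and of $\tau^{\mrm{L}}$) used above is part of the functoriality assertions of Propositions \ref{prop:20} and \ref{prop:21} together with the constructions of \cite{PSY}.
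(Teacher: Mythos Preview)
Your argument is correct, but the paper's proof is considerably shorter and more elementary. Instead of passing to $\cat{D}(\cat{Mod} A)$ and derived completion, the paper applies Hom--tensor adjunction directly: the map $\opn{Hom}(\opn{id}_T, \opn{Hom}(u, \opn{id}_N))$ is isomorphic in $\cat{C}(\cat{Mod} A)$ to $\opn{Hom}(u \ot \opn{id}_T, \opn{id}_N) : \opn{Hom}_A(T, N) \to \opn{Hom}_A(T \ot_A T, N)$, and by \cite[Corollary 7.9]{PSY} the map $u \ot \opn{id}_T : T \ot_A T \to T$ is a homotopy equivalence, so the displayed map is one too. This stays entirely at the level of complexes and homotopy, never touching $\mrm{L}\La_{\a}$ or Propositions \ref{prop:20}--\ref{prop:21}, and in fact gives the slightly stronger conclusion that the map is a homotopy equivalence. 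Your route --- reducing to the idempotence statement that $\mrm{L}\La_{\a}(\tau^{\mrm{L}}_N)$ is an isomorphism, and then deriving that from \cite[Lemmas 7.2 and 7.6]{PSY} via the naturality square for $\si^{\mrm{R}}$ --- is valid and pleasantly conceptual (it makes the lemma visibly parallel to Lemmas \ref{lem:1} and \ref{lem:2}), but it invokes heavier inputs and requires the bookkeeping you flag in your last paragraph, whereas the adjunction trick sidesteps all of that.
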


\begin{proof}
Using Hom-tensor adjunction, we can replace the given homomorphism 
with the homomorphism
\[ \opn{Hom}(u \ot \opn{id}, \opn{id}) :
\opn{Hom}_A (T, N) \to \opn{Hom}_A( T \ot_A T, N)  \]
in $\cat{C}(\cat{Mod} A)$, that is isomorphic to it. 
Now, by \cite[Corollary 7.9]{PSY}, the homomorphism 
\[ u \ot \opn{id}_T : T \ot_A T \to T \]
is a homotopy equivalence. Hence $\opn{Hom}(u \ot \opn{id}_T, \opn{id}_N)$
is a quasi-isomorphism.
\end{proof}

\begin{lem} \label{lem:20} 
For any complex of $A$-modules $N$, the homomorphism of complexes
\[ u \ot \opn{id}_T \ot \opn{id}_N :  T \ot_A T \ot_A N 
\to T \ot_A N \]
is a quasi-isomorphism. 
\end{lem}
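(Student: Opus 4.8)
The plan is to reduce this to a statement already available in the excerpt, namely \cite[Corollary 7.9]{PSY}, which was invoked in the proof of Lemma \ref{lem:4} and says that $u \ot \opn{id}_T : T \ot_A T \to T$ is a homotopy equivalence. First I would observe that the homomorphism in question factors as
\[ u \ot \opn{id}_T \ot \opn{id}_N = (u \ot \opn{id}_T) \ot_A \opn{id}_N : (T \ot_A T) \ot_A N \to T \ot_A N . \]
That is, it is obtained by applying the functor $(-) \ot_A N$ to the morphism $u \ot \opn{id}_T : T \ot_A T \to T$ of complexes of $A$-modules.

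Next I would use the fact that tensoring preserves homotopy: if $f : K \to L$ is a homotopy equivalence of complexes of $A$-modules, then $f \ot_A \opn{id}_N : K \ot_A N \to L \ot_A N$ is again a homotopy equivalence, for any complex $N$. Indeed, a homotopy inverse $g$ together with homotopies $gf \simeq \opn{id}$, $fg \simeq \opn{id}$ tensor up to a homotopy inverse $g \ot_A \opn{id}_N$ and homotopies $(g \ot_A \opn{id}_N)(f \ot_A \opn{id}_N) \simeq \opn{id}$, $(f \ot_A \opn{id}_N)(g \ot_A \opn{id}_N) \simeq \opn{id}$ on the tensored complexes; this is a purely formal consequence of the bifunctoriality of $\ot_A$ on the homotopy category. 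Applying this with $f := u \ot \opn{id}_T$, which is a homotopy equivalence by \cite[Corollary 7.9]{PSY}, shows that $u \ot \opn{id}_T \ot \opn{id}_N$ is a homotopy equivalence, hence in particular a quasi-isomorphism.

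There is essentially no obstacle here: the only point requiring a word of care is that $T$ is a bounded complex of free $A$-modules (it is the telescope complex), so no cofibrancy hypothesis on $N$ is needed — tensoring over $A$ with $T$ is already exact on the level of complexes and the homotopy-preservation argument above is unconditional. An alternative, slightly heavier, route would be to identify $u \ot \opn{id}_T \ot \opn{id}_N$, via Proposition \ref{prop:20}, with the canonical morphism $\mrm{R}\Ga_{\a}(\mrm{R}\Ga_{\a}(N)) \to \mrm{R}\Ga_{\a}(N)$ in $\cat{D}(\cat{Mod} A)$ and cite idempotence of $\mrm{R}\Ga_{\a}$; but the homotopy-equivalence argument is cleaner and avoids invoking the comparison isomorphisms, so that is the one I would write down.
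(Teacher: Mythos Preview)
Your proposal is correct and follows exactly the paper's approach: the paper's entire proof is the one sentence ``We already know that $u \ot \opn{id}_T$ is a homotopy equivalence,'' relying (as you do) on \cite[Corollary 7.9]{PSY} and the fact that tensoring with any complex preserves homotopy equivalences. Your write-up simply makes explicit the steps the paper leaves implicit.
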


\begin{proof}
We already know that $u \ot \opn{id}_T$ is a homotopy equivalence.
\end{proof}

\begin{thm}[GM Duality] \label{thm:1}
Let $A$ be a commutative ring, and let $\a$ be a weakly proregular ideal in 
$A$. For any $M, N \in \cat{D}(\cat{Mod} A)$ the morphisms 
\[ \begin{aligned}
\opn{RHom}_A \bigl( \mrm{R} \Gamma_{\a} (M), \mrm{R} \Gamma_{\a} (N) \bigr)
& \xar{\opn{RHom}(\opn{id}, \sigma^{\mrm{R}}_N)}
\opn{RHom}_A \bigl( \mrm{R} \Gamma_{\a} (M), N \bigr) 
\\
& \xar{\opn{RHom}(\opn{id}, \tau^{\mrm{L}}_N)}
\opn{RHom}_A \bigl( \mrm{R} \Gamma_{\a} (M), \mrm{L} \Lambda_{\a} (N) \bigr)
\\
&  
\xleftarrow{\opn{RHom}(\sigma^{\mrm{R}}_M, \opn{id})}
\opn{RHom}_A \bigl( M, \mrm{L} \Lambda_{\a} (N) \bigr)
\\
& 
\xleftarrow{\opn{RHom}(\tau^{\mrm{L}}_M, \opn{id})}
\opn{RHom}_A \bigl( \mrm{L} \Lambda_{\a} (M), \mrm{L} \Lambda_{\a} (N) \bigr)
\end{aligned} \]
in $\cat{D}(\cat{Mod} A)$ are isomorphisms.
\end{thm}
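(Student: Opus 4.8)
The plan is to transport each of the four morphisms, via the telescope $T$ and derived Hom--tensor adjunction, into the form ``$\opn{RHom}_A$ in one variable, applied to one of Lemmas \ref{lem:1}, \ref{lem:2}, \ref{lem:4}, \ref{lem:20}'', which makes the conclusion automatic. First I would invoke Propositions \ref{prop:20} and \ref{prop:21} to replace $\mrm{R}\Ga_{\a}(-)$ by $T \ot_A(-)$ and $\mrm{L}\La_{\a}(-)$ by $\opn{Hom}_A(T,-)$ everywhere; by the commutative triangles there, under these identifications $\si^{\mrm{R}}_{X}$ becomes $u \ot \opn{id}_{X}\colon T \ot_A X \to X$ and $\tau^{\mrm{L}}_{X}$ becomes $\opn{Hom}(u,\opn{id}_{X})\colon X \to \opn{Hom}_A(T,X)$. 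Since $T$ is a bounded complex of free $A$-modules it is K-projective and K-flat, so $T \ot_A(-)$ computes $T \ot^{\mrm{L}}_A(-)$ and $\opn{Hom}_A(T,-)$ computes $\opn{RHom}_A(T,-)$; hence derived Hom--tensor adjunction gives a natural isomorphism
\[ \opn{RHom}_A\bigl( T \ot_A X,\, Y \bigr) \;\cong\; \opn{RHom}_A\bigl( X,\, \opn{Hom}_A(T,Y) \bigr) \]
for all $X,Y \in \cat{D}(\cat{Mod} A)$.

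For the first two morphisms the ``moving'' arrow $\si^{\mrm{R}}_N$, resp.\ $\tau^{\mrm{L}}_N$, is in the second $\opn{RHom}$-variable, while the first variable is $\mrm{R}\Ga_{\a}(M) = T \ot_A M$. Applying the displayed isomorphism in the variable $Y$, the first morphism is carried to $\opn{RHom}_A(M,-)$ applied to the map of Lemma \ref{lem:1} (with $N$ for $M$), and the second to $\opn{RHom}_A(M,-)$ applied to the map of Lemma \ref{lem:4}; both these maps are quasi-isomorphisms, hence isomorphisms in $\cat{D}(\cat{Mod} A)$, so the two morphisms are isomorphisms as $\opn{RHom}_A(M,-)$ is a functor on $\cat{D}(\cat{Mod} A)$.

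For the last two morphisms the moving arrow $\si^{\mrm{R}}_M$, resp.\ $\tau^{\mrm{L}}_M$, is in the first $\opn{RHom}$-variable, while the second variable is $\mrm{L}\La_{\a}(N) = \opn{Hom}_A(T,N)$. Applying the displayed isomorphism in the variable $X$, the morphism $\opn{RHom}(\si^{\mrm{R}}_M,\opn{id})$ is carried to $\opn{RHom}_A(-,N)$ applied to $\opn{id}_T \ot u \ot \opn{id}_M\colon T \ot_A T \ot_A M \to T \ot_A M$, which, after composing with the symmetry isomorphism of the two copies of $T$, is the map of Lemma \ref{lem:20} (with $M$ for $N$); and $\opn{RHom}(\tau^{\mrm{L}}_M,\opn{id})$ is carried to $\opn{RHom}_A(-,N)$ applied to $\opn{id}_T \ot \opn{Hom}(u,\opn{id}_M)\colon T \ot_A M \to T \ot_A \opn{Hom}_A(T,M)$, which is the map of Lemma \ref{lem:2}. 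Both are quasi-isomorphisms, so, $\opn{RHom}_A(-,N)$ being a contravariant functor on $\cat{D}(\cat{Mod} A)$, the two morphisms are isomorphisms. This covers all four cases.

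I expect the real difficulty to be conceptual rather than computational. The trap --- and exactly the gap in the original proof in \cite{PSY} --- is to feed $u \ot \opn{id}$ or $\opn{Hom}(u,\opn{id})$ directly into $\opn{RHom}_A(T \ot_A M, -)$; these maps are nowhere near being quasi-isomorphisms, and the fix is precisely to use the adjunction to push the superfluous copy of $T$ onto the other variable, where one of the corrected lemmas takes over. What remains is bookkeeping: one needs the isomorphisms of Propositions \ref{prop:20} and \ref{prop:21} to be natural and compatible with $\si^{\mrm{R}}$ and $\tau^{\mrm{L}}$, and the Hom--tensor adjunction to be natural in both variables, so that the four morphisms literally take the forms described above; all of this is available in \cite{PSY}.
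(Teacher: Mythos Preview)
Your proposal is correct and follows essentially the same route as the paper's proof: replace $\mrm{R}\Ga_{\a}$ and $\mrm{L}\La_{\a}$ by $T\ot_A(-)$ and $\opn{Hom}_A(T,-)$ via Propositions \ref{prop:20} and \ref{prop:21}, use Hom--tensor adjunction to shift the extra copy of $T$ to the other variable, and then invoke Lemmas \ref{lem:1}, \ref{lem:4}, \ref{lem:20}, \ref{lem:2} respectively. The only cosmetic difference is that the paper fixes a K-projective resolution $P\to M$ and a K-injective resolution $N\to I$ and carries out the adjunction manipulation in $\cat{C}(\cat{Mod} A)$, whereas you stay in $\cat{D}(\cat{Mod} A)$ and appeal to derived adjunction and the functoriality of $\opn{RHom}_A$; both are valid and amount to the same computation.
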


\begin{proof}
Choose a weakly proregular sequence $\bsym{a}$ that generates $\a$, and write 
$T := \opn{Tel}(A; \bsym{a})$ and $u := u_{\bsym{a}}$.
Next choose a K-projective resolution $P \to M$, and a K-injective resolution 
$N \to I$. The complex $T \ot_A P$ is K-projective, and the complex 
$\opn{Hom}_A(T, I)$ is K-injective.  

By Propositions \ref{prop:20} and \ref{prop:21}, we can replace 
the diagram above with the diagram 
\begin{equation} \label{eqn:25}
\begin{aligned}
& \opn{Hom}_A \bigl( T \ot_A P, T \ot_A I \bigr)
\\
& \qquad \xar{\opn{Hom}(\opn{id} \ot \opn{id}, u \ot \opn{id} )}
\opn{Hom}_A \bigl( T \ot_A P, I \bigr) 
\\
& \qquad \xar{\opn{Hom}(\opn{id} \ot \opn{id}, \opn{Hom}( u, \opn{id}))}
\opn{Hom}_A \bigl( T \ot_A P, \opn{Hom}_A(T, I) \bigr)
\\ 
& \qquad
\xleftarrow{ \opn{Hom}( u \ot \opn{id}, \opn{Hom}( \opn{id}, \opn{id})) }
\opn{Hom}_A \bigl( P, \opn{Hom}_A(T, I) \bigr)
\\ 
& \qquad
\xleftarrow{ \opn{Hom}(\opn{Hom}(\opn{id}, u), \opn{Hom}( \opn{id}, \opn{id})) }
\opn{Hom}_A \bigl( \opn{Hom}_A(T, P), \opn{Hom}_A(T, I) \bigr)
\end{aligned}
\end{equation}
in $\cat{C}(\cat{Mod} A)$. We will prove that all these homomorphisms are
quasi-isomorphisms. 

First we deal with the two forward facing homomorphisms in (\ref{eqn:25}). 
By Hom-tensor adjunction we can move the complex $P$ to the left in all 
expressions, thus obtaining an isomorphic diagram 
\[ \begin{aligned}
& \opn{Hom}_A \bigl( P, \opn{Hom}_A (T, T \ot_A I) \bigr)
\\
& \qquad \xar{\opn{Hom}(\opn{id}, \opn{Hom}(\opn{id}, u \ot \opn{id}))}
\opn{Hom}_A \bigl( P, \opn{Hom}_A(T, I) \bigr) 
\\
& \qquad \xar{\opn{Hom}(\opn{id}, \opn{Hom}(\opn{id}, \opn{Hom}( u, \opn{id})))}
\opn{Hom}_A \bigl( P, \opn{Hom}_A(T, \opn{Hom}_A(T, I)) \bigr) 
\end{aligned} \]
in $\cat{C}(\cat{Mod} A)$.
Because $P$ is K-projective, it suffices to prove that 
\[ \begin{aligned}
& \opn{Hom}_A (T, T \ot_A I) 
\\
& \qquad \xar{ \opn{Hom}(\opn{id}, u \ot \opn{id}) }
\opn{Hom}_A(T, I) 
\\
& \qquad \xar{ \opn{Hom}(\opn{id}, \opn{Hom}( u, \opn{id})) }
\opn{Hom}_A(T, \opn{Hom}_A(T, I))
\end{aligned} \]
are quasi-isomorphisms. This is true by Lemmas \ref{lem:1} and \ref{lem:4}.

Now we deal with the backward facing homomorphisms in (\ref{eqn:25}). 
Using Hom-tensor adjunction, we can move the rightmost occurrence of the 
complex $T$ to the left, thus obtaining an isomorphic diagram 
\[ \begin{aligned}
& 
\opn{Hom}_A ( T \ot_A T \ot_A P, I )
\\ 
& \qquad
\xleftarrow{ \opn{Hom}( u \ot \opn{id} \ot \opn{id}, \opn{id}) }
\opn{Hom}_A (T \ot_A P, I) 
\\ 
& \qquad
\xleftarrow{ \opn{Hom}( \opn{id} \ot  \opn{Hom}(\opn{id}, u), \opn{id}) }
\opn{Hom}_A \bigl( T \ot_A \opn{Hom}_A(T, P),  I \bigr) 
\end{aligned} \]
in $\cat{C}(\cat{Mod} A)$.
To prove these are quasi-isomorphisms, we can remove the K-injective complex 
$I$. Thus it suffices to show that 
\[ \begin{aligned}
& 
T \ot_A T \ot_A P 
\\ 
& \qquad
\xrightarrow{ u \ot \opn{id} \ot \opn{id} }
T \ot_A P 
\\ 
& \qquad
\xrightarrow{ \opn{id} \ot  \opn{Hom}(u, \opn{id}) }
T \ot_A \opn{Hom}_A(T, P) 
\end{aligned} \]
are quasi-isomorphisms. This is true by Lemmas \ref{lem:2} and \ref{lem:20}.
\end{proof}

We should remark that there was a correct proof of a weaker version of Theorem 
\ref{thm:1} in earlier versions of the paper \cite{PSY} (the assumption was 
that $A$ is noetherian). This weaker statement had already been proved in 
\cite{AJL}.


\end{document}